\documentclass[letterpaper, 10 pt, conference]{ieeeconf}  
\usepackage[T1]{fontenc}
\usepackage[utf8]{inputenc}
\usepackage{float}
\usepackage{color,xcolor}
\usepackage{verbatim}

\IEEEoverridecommandlockouts                              

\newtheorem{theorem}{Theorem}
\let\labelindent\relax
\usepackage{enumitem}

\usepackage{amsmath,amssymb,graphicx,mathptmx}

\title{\LARGE \bf
A Bilevel Market Equilibrium Model for Natural Gas Pipeline Network Operations: Convex Relaxations and Single-level Reformulations
}

\author{Novel Kumar Dey$^{1}$, Afrooz Jalilzadeh$^{1}$, Erfan Yazdandoost Hamedani$^{1}$, Saif R. Kazi$^{2}$ and Anatoly Zlotnik$^{2}$
\thanks{*Research presented in this article was supported by the Laboratory Directed Research and Development program of Los Alamos National Laboratory under Exploratory Research project number 20261039ER. Research conducted at Los Alamos National Laboratory is done under the auspices of the National Nuclear Security Administration of the U.S. Department of Energy under Contract No. 89233218CNA0000001. Los Alamos National Laboratory report number LA-UR-26-29038.}
\thanks{$^{1}$Department of Applied Mathematics (Novel Dey)
and Systems and Industrial Engineering
(Afrooz Jalilzadeh and Erfan Yazdandoost Hamedani),
University of Arizona, Tucson, USA.
{\tt\small \{ndey, afrooz, erfany\}@arizona.edu}}%
\thanks{$^{2}$Applied Mathematics \& Plasma Physics, Theoretical Division, Los Alamos National Laboratory, Los Alamos, USA.
{\tt\small \{skazi, azlotnik\}@lanl.gov}}}

\begin{document}

\maketitle
\thispagestyle{empty}
\pagestyle{empty}

\begin{abstract}
We propose an integrated market equilibrium model for determining the spot market prices and flow rates in a natural gas pipeline network. The traditional approach solves the market equilibrium model separately from the natural gas network flow allocation problem in a sequential manner. Our proposed framework synthesizes marketing and operations by using a bilevel optimization formulation where the market equilibrium model in the upper level is connected and simultaneously solved with the natural gas network flow model in the lower level. We introduce convex relaxations for the nonconvex Weymouth pipe pressure drop equation and a surrogate for the compressor operating cost. Furthermore, we compare multiple single level reformulations of the bilevel problem based on Karush-Kuhn-Tucker (KKT) optimality conditions, Wolfe duality, and Lagrange duality conditions. Numerical studies on a small 6-node network demonstrates the efficacy and accuracy of our model and the comparison between different duality and optimality based reformulations.

\end{abstract}


\section{INTRODUCTION}
The urgent need to increase electricity supplies worldwide is driving  rapid construction of natural gas-fired generators to service base load in the power grid as well as to compensate for variability and intermittency in concurrently expanding renewable electricity sources \cite{jenkins2015dynamic}.  The resulting interdependence between energy flows in gas pipelines and the power grid, as well as markets for wholesale electricity and natural gas has motivated methods to coordinate operations \cite{he2018coordination,zlotnik2016coordinated} as well as financial markets for wholesale power and gas in day-ahead and in real-time markets \cite{schwele2021coordination}. Gas-electric system interdependence results in complex market dynamics \cite{jenkins2015dynamic}, in which actions in one market influence outcomes in both.  


A major difference between how the power grid and gas pipelines function is the degree to which physical and financial decision-making are integrated.  For the power grid, system operators dispatch generators based on the outcomes of optimal power flow (OPF) optimization problems \cite{dommel1968optimal}.  Generation asset managers provide structured bids to the wholesale electricity market administrator, and the market is cleared day-ahead and in near real-time using optimization.  The OPF problem determines the power production setpoints for participating generators that meet demand, satisfy transmission constraints, and minimize cost.  The dual variables are interpreted as locational marginal prices of power \cite{caramanis1982optimal}.  In contrast, gas pipelines are built to provide a given transportation capacity on each segment, and gas transportation service is sold to shippers \cite{mohlin2021us,hulshof2016market}.  This first stage typically uses capacity models, and does not consider physical variables such as pressure or flow.  Pipeline operators then determine compressor settings and the flow allocation on the pipeline that satisfy consumer nominations.  


Researchers have examined problem formulations to synthesize financial and operating processes in gas pipeline networks using optimization \cite{oneill87,read12,rudkevich17hicss}.  A proposed solution is the optimal gas flow (OGF) problem, which has been examined in steady-state \cite{wu2017adaptive,baker2026bling} and dynamic contexts \cite{zlotnik2019optimal,mak2019dynamic}.  While it can be used to produce physical and financial solutions, the OGF is a single-level nonlinear program (NLP) formulation that additively includes competing terms that maximize revenue and minimize compressor operating costs in the objective function. While the OGF can be interpreted as a representation of an efficient market for congestion pricing on a pipeline network, it does not represent the actual two-stage function of wholesale gas markets and operations.  In addition, the nonlinearity and nonconvexity of gas flow equations warrants careful interpretation of the dual solution in the OGF and other optimization problems for natural gas networks.

To model the hierarchical interplay between economic trading and physical transport in liberalized gas markets, recent studies established multi-level equilibrium frameworks where upper-level market participants optimize competitive choices while lower-level transmission system operators (TSOs) enforce physical network constraints \cite{Plein2021Bilevel,Grimm2025Tractable}. Because embedded lower-level physics that are governed by non-convex equations like the Weymouth pressure-drop relationship preclude standard Karush--Kuhn--Tucker (KKT) or strong-duality reformulations, concurrent studies introduced tailored convex relaxations (such as second-order cone bounds) coupled with disjunctive cuts and branch-and-bound techniques \cite{Schewe2022Global,Heitsch2023Convex}. This integration of physical transport dynamics with convex relaxation schemes yields computationally tractable single-level reformulations that rigorously account for TSO control, market power, and network bottlenecks in large-scale natural gas grids.

In this study, we develop a bi-level optimization formulation that represents the wholesale market and operating solution stages of decision making related to flow scheduling in gas pipeline systems.  The upper or outer level is a capacity model that maximizes economic value for pipeline users based on supply and demand bids, and accounts for regulated tariffs on each pipeline segment.  The lower or inner level solves an optimal gas flow problem that minimizes the cost of gas compressor operation to deliver the cleared gas nominations subject to the physical and engineering constraints used in the OGF.  We then apply and compare various convex relaxations and single-level reformulations, including Karush-Kuhn-Tucker (KKT) based mathematical program with complementarity constraints (MPCC) \cite{dempe2020bilevel}, Wolfe-duality (WDP) \cite{li2023wolfe}, Mond-Weir (MDP) \cite{li2024mondweir}, and Lagrange-dual \cite{dempe2025duality} reformulations.  Our key contribution is a theorem on projection of feasible sets and lifting of feasible points for these reformulations, and we use a market equilibrium test case for a small synthetic network to compare the latter using several performance metrics.


The remainder of the paper is as follows. In Section \ref{sec:bilevel}, we describe the bilevel market equilibrium problem expansion of the OGF.  In Section \ref{sec:relax}, we describe convexification of the equality constraints that represent steady-state gas flow through a pipe.  In Section \ref{sec:reform}, we describe single-level reformulations of the bilevel problem, and then state and prove our main feasibility equivalence theorem.  We describe the test case and numerical experiments in Section \ref{sec:numer}, and then conclude in Section \ref{sec:conc}.

\section{Bilevel Gas Network Market Model}\label{sec:bilevel}
The Natural Gas market equilibrium flow model is formulated as a bilevel optimization problem. The upper or outer level optimization represents the market equilibrium problem with a cost based economic objective function constrained to limits on supply and demand of natural gas. The lower or inner level problem is the traditional optimal gas flow (OGF) which minimizes the operating cost of the pipeline network constrained to the physical gas flow dynamics and the pressure and network flow limits.

\subsection{Market Equilibrium problem}
The market equilibrium optimization (MEO) problem maximizes the total economic surplus by minimizing the production/injection cost minus the sales revenue from supplying the natural gas demand. The objective function for the upper problem ($\phi_u$) is written as: 
\begin{equation}\label{eq:F}
    \Phi_u = \sum_{j\in\mathbb G} C_j(g_j) - \sum_{i\in\mathbb D} V_i(d_i)+ \theta c_T\sum_{ij\in\mathbb P}f_{ij},
\end{equation}
where the sets of producer and consumer are denoted by $\mathbb G$ and $\mathbb D$, $C_j(g_j)$ denote producer $j$'s production cost and $V_i(d_i)$ the value of demand served to consumer $i$. Additionally, a cost term on pipe flows in the network is included with parameter $\theta$ and per unit flow cost $c_T$. This term can represent network flow based tariffs or additional maintenance cost of using the pipeline network. Also, there are capacity based limits on the injection or production flows $g_j$ and the withdrawal or consumer flows $d_i$. The complete market equilibrium problem can be formulated as:
\begin{subequations}
\begin{align}\label{eq:upper}
    \mathbf{(P_{upper})} \quad & \min_{g_j,d_i}  &&  \Phi_{u} =  \sum_{j\in\mathbb G} C_j(g_j) - \sum_{i\in\mathbb D} V_i(d_i)+ \theta c_T\sum_{ij\in\mathbb P}f_{ij} \\
    & \text{s.t.} && \underline g_j \leq g_j \leq \overline{g_j}, \\
    & && \underline d_i \leq d_i \leq \overline{d_i}, \\
    & && \sum_j g_j - \sum_i d_i = 0.
\end{align}    
\end{subequations}

Note that without the tariff term, i.e. if $\theta=0$, the market equilibrium model is agnostic to the pipeline network.

\subsection{Optimal Gas Flow (OGF) problem}
The natural gas operations problem or the OGF requires solving an optimization problem with physics based constraints on the pressure and flow rates of natural gas inside the pipes and minimizing the operations cost for compressors. This is done by modeling the natural gas pipeline network as a connected graph with pipe junctions represented by nodes with node-set $\mathbb N$ and arc/edges representing the pipes and compressors with set $\mathbb E=\mathbb P\cup\mathbb C$, where $\mathbb P$ and $\mathbb C$ are the disjoint sets of pipes and compressors.

\subsubsection{Nodal balance constraints}
At each node in the network, the net incoming ($\partial^-$) and outgoing ($\partial^+$) flow is balanced with the total withdrawal of natural gas,
\begin{equation}
    \sum_{i \in \partial^-_j} f_{ij} - \sum_{k \in \partial^+_j} f_{jk} = g_j - d_j, \quad \forall j \in \mathbb{N}.
\end{equation}
The nodal flow balance constraint can be rewritten in vector form using the incidence matrix $A \in \mathbb{R}^{|\mathbb{N}| \times |\mathbb{E}|}$,
\begin{equation}\label{eq:node_balance}
    A \mathbf{f} = B_g \mathbf{g} - B_d \mathbf{d}.
\end{equation}
where the matrices $B_g \in \mathbb R^{|\mathbb{N}| \times |\mathbb{G}|}$ and $B_d \in  \mathbb R^{|\mathbb{N}| \times |\mathbb{D}|}$ place producers and consumers at their nodes. Each column of these two matrices contains one unit entry and is zero otherwise. Note that, the model does not consider the compressor fuel withdrawals and other mass losses.

\subsubsection{Pressure Drop Constraint}
Across each pipe $(i,j) \in \mathbb{P}$, the pressure drop in natural gas flow is given by the steady-state Weymouth equation
\begin{equation}\label{eq:weymouth_exact}
    \Pi_i-\Pi_j=\omega_{ij} f_{ij}|f_{ij}|, \qquad ij\in\mathbb P,
\end{equation}
where $\Pi_i = p^2$ is the squared gas pressure at nodes and $\omega_{ij} = \frac{a^2 \lambda_{ij} L_{ij}}{A^2_{ij}D_{ij}}$ is the pipe resistance coefficient with $a$ denote the wave speed of natural gas. $\lambda, L, A$ and $D$ denote the friction factor, length, cross-section area and diameter of the pipe respectively \cite{srinivasan2026solution}. The Weymouth equation is nonlinear and nonconvex constraint because of the $f|f|$ term.

\subsubsection{Compressor balance}
For compressors $(i,j) \in \mathbb{C}$, the pressure across the compressor is modeled as a simple constraint using the compressor ratio variable $\alpha$ such that:
\begin{equation}\label{eq:compressor}
    \Pi_j - \alpha_{ij} \Pi_i = 0, \qquad ij \in \mathbb{C},
\end{equation}
with bounds on the compressor ratio as $1 \leq \alpha \leq \alpha^{max}$.

\subsubsection{Slack node}
At each slack node in the network, the pressure is fixed to a constant value,
\begin{equation}
    \Pi_s = K,
\end{equation}
where $\Pi_s$ denotes the squared slack pressure.

\subsubsection{Gas-network limit constraints}
Each pipe and compressor edge in the network $e \in \mathbb{E}$ has finite flow capacities. Pipes permit flow in either direction, whereas compressors operate only in their prescribed direction:
\begin{align}
    -\overline{f_{ij}}& \leq f_{ij} \leq \overline{f_{ij}}, &&  ij \in \mathbb{P}, \label{eq:pipe_flow_bounds}\\
    0 & \leq f_{ij} \leq \overline f_{ij}, && ij\in \mathbb{C}. \label{eq:compressor_flow_bounds}
\end{align}
At each node, squared pressure satisfies
\begin{equation}\label{eq:pressure_bounds}
    \underline{\Pi}_i \leq \Pi_i \leq \overline{\Pi}_i, \qquad i \in \mathbb{N},
\end{equation}
where $\underline{\Pi}_i$ and $\overline{\Pi}_i$ are lower and upper pressure limits.

\subsubsection{Compressor Work Objective}
The objective of the optimal gas flow problem is to minimize the operating cost of compressors. The compressor cost is directly proportional to the compressor work,
\begin{equation}\label{eq:compressor_work}
    W_c = \sum_{ij \in C} f_{ij} (\alpha_{ij}^m - 1), 
\end{equation}
where $m$ denotes the isotropic coefficient and $W_c$ denotes the total compressor work done by the compressors \cite{sundar2018state}.

\subsubsection{Problem Formulation}
The complete formulation of the optimal gas flow (OGF) problem is 
\begin{align}
    \mathbf{(P_{lower})} \quad & \min_{\alpha} \quad  \Phi_{l} =  c_e W_c (\alpha) \\
    & \text{s.t.} \quad  \text{Eq.}\eqref{eq:node_balance} -\eqref{eq:compressor_work}.
\end{align}
Here $c_e$ denotes the per-unit electricity cost and $\Phi_l$ is the total operating cost of compressors, which is used as the lower level objective function.

\subsection{Bilevel Problem}
The bilevel gas network market model solves both the market equilibrium optimization (MEO) jointly with the optimal gas flow (OGF) problem.  The upper level variables, i.e., injection or production $g$ and withdrawal or consumer flows $d$, are the parameters in the lower level OGF problem in Eq.\eqref{eq:node_balance}, whereas the lower level variable viz. the pipe flow rates appear in the upper level objective function. The entire bilevel gas network problem can thus be formulated as:
\begin{subequations}\label{eq:bilevel_problem}
\begin{align}
\mathbf{(P)} \quad & \min_{g_j,d_i,\alpha}  &&  \Phi_{u} =  \sum_{j\in\mathbb G} C_j(g_j) - \sum_{i\in\mathbb D} V_i(d_i)+ \theta c_T\sum_{j\in\mathbb P}f_j \\
& \text{s.t.} && \text{Eq.}(2b)-(2c), \\
& && \alpha = \text{argmin}_{\alpha} \quad \Phi_l = c_e W_c (\alpha), \\
& && \quad \quad \text{s.t.} \quad \qquad \text{Eq.}(4)-(11).
\end{align}   
\end{subequations}
This bilevel problem formulation exactly represents the interplay between the natural gas market and the pipeline network operations.

\section{Convex Relaxations}\label{sec:relax}
Bilevel optimization problems are structurally non-convex and NP-hard \cite{dempe2020bilevel}, making it very difficult to find their optimal solutions. Typical methods reformulate the bilevel problem into a single level optimization problem using either the optimality or duality conditions on the lower level problem. It is known that optimal solutions of the single level reformulation need not be optimal for the original bilevel problem \cite{dempe2020bilevel}. However, it has been shown that if the lower level problem is convex and the global solution of the single level reformulation lies in the interior of it's feasible region, then the solution is a global optimal point to the bilevel problem \cite{DempeDutta}.

The original bilevel gas network market optimization problem formulated in Eq. \eqref{eq:bilevel_problem} is both nonlinear and nonconvex. We thus propose convex relaxations to the lower level problem such that the leader-follower formalism from \cite{DempeDutta} is applicable in this case. The main non-convexities in the lower level problem arise from the objective function, the bilinear compressor equalities, and the nonlinear Weymouth equation. In the following subsections, we propose convex reformulations for the objective function which substitutes the compressor ratios, and relax the Weymouth equalities using a set of convex hull inequalities for fixed directional flow.

\subsection{Convex Objective}
In the lower level OGF problem, the objective is to minimize the operating cost of compressors using the adiabatic compression work expression, which is a non-convex function of the compressor ratio $\alpha$ (see Eq.\eqref{eq:compressor_work}). Alternatively, we can also use a convex surrogate for the compressor work in the form of pressure drop across compressors,
\begin{equation}\label{eq:surr_compressor}
    \tilde \Phi_l = \beta_c \sum_{ij \in C} (\Pi_j - \Pi_i),
\end{equation}
where $\beta_c$ can be used as a scaling parameter to adjust the magnitude of surrogate function with the nonconvex compressor work. This is based on the fact that gas pipeline operators explicitly control the compressor discharge pressure ($\Pi_{j}$) and try to minimize the pressure rise i.e. difference between inlet and outlet pressures as shown in Eq. \eqref{eq:surr_compressor}.

\subsection{Equivalent compressor constraints}
The compressor equality \eqref{eq:compressor} is bilinear
in the decision variables $\alpha_{ij}$ and $\Pi_i$. After replacing the compressor-work objective by Eq. 
\eqref{eq:surr_compressor}, the ratio variables appear only in this equality and their bounds. Since operating pressures satisfy $\Pi_i>0$, eliminating $\alpha_{ij}$ gives the equivalent affine constraints

\begin{equation}\label{eq:compressor_affine}
\begin{cases}
  \Pi_i-\Pi_j &\leq 0,\quad  ij\in\mathbb C, \\
\Pi_j-\alpha^{\max}\Pi_i &\leq 0, \quad ij\in\mathbb C.
\end{cases}
\end{equation}
Conversely, every pressure pair satisfying Eq. \eqref{eq:compressor_affine} defines an admissible ratio
$\alpha_{ij}=\Pi_j/\Pi_i\in[1,\alpha^{\max}]$. Thus, this elimination preserves the feasible
flow-pressure pairs and introduces no additional
relaxation. Hence the follower variables can therefore be
written as $y=(f,\Pi)$.

\subsection{Convexified Weymouth Equation}

We obtain the convex relaxation of the nonlinear Weymouth equation, through two distinct modeling steps: (i) selecting an operating direction for each pipe and (ii) relaxing the corresponding Weymouth equality. 

\subsubsection{Fixed-direction assumption}
In the exact model, a pipe ($ij \in \mathbb{P}$) may carry gas in either direction. Its reference orientation defines $f_{ij}>0$ as flow from $i$ to $j$ and $f_{ij}<0$ as flow from $j$ to $i$.

We now assume that each pipe operates only in its reference orientation and impose
\begin{equation} \label{eq:fixed_flow_bounds}
 0\leq f_{ij}\leq\overline{f_{ij}},
 \qquad ij\in\mathbb{P}.
\end{equation}
The permitted direction of each pipe is chosen
during model construction and held fixed throughout optimization. The solver determines the nonnegative flow rate in that specified direction. On this flow interval, $f_{ij}|f_{ij}|=f_{ij}^2$ term in the Weymouth equation becomes
\begin{equation}\label{eq:weymouth_fixed}
 \Pi_i-\Pi_j=\omega_{ij} f_{ij}^2,
 \qquad ij\in\mathbb{P}.
\end{equation}

This restriction selects a single quadratic branch of the signed Weymouth relation over a bounded flow interval. However, the equality graph in Eq. \eqref{eq:weymouth_fixed} remains nonconvex; selecting a direction alone does not convexify the pipe constraint. For a fixed market allocation $x$, let
$\mathbb Y_{\mathrm{fd}}(x)$ denote the exact network feasible set with the additional nonnegative pipe-flow restrictions. Then
\begin{equation} \label{eq:direction_restriction}
 \mathbb Y_{\mathrm{fd}}(x)
 \subseteq
 \mathbb Y_{\mathrm{ex}}(x).
\end{equation}
Thus, fixing directions restricts the exact model rather than relaxes it.  Market allocations that are feasible with bidirectional pipe flows may become infeasible under the selected operating directions.

\subsubsection{Convex relaxation of pipe constraints}
Figure \ref{fig:envelope}(a) shows the exact bidirectional Weymouth relation. The direction restriction in Eq. 
\eqref{eq:fixed_flow_bounds} selects its nonnegative branch, $\Pi_i-\Pi_j=\omega_{ij} f_{ij}^2$, whose equality graph remains nonconvex. We now replace this graph by its convex envelope over the prescribed flow interval. For a pipe ($ij\in\mathbb{P}$), we write $\Delta\Pi_{ij}=\Pi_i-\Pi_j$. On the interval $0\le f_{ij}\le\overline{f_{ij}}$, the fixed-direction
Weymouth relation is the graph of the convex function $\Delta\Pi_{ij}=\omega_{ij} f_{ij}^2$. The straight line joining its endpoints $(0,0)$ and
$(\overline f_{ij},\omega_{ij}\overline f_{ij}^2)$ is
$\Delta\Pi_{ij}=\omega_{ij}\overline{f_{ij}} f_{ij}$. Since $f_{ij}^2\leq \overline{f_{ij}} f_{ij}$ for $0\leq f_{ij}\leq \overline{f_{ij}}$, the quadratic Weymouth curve lies below this chord. We replace the equality by
\begin{equation}\label{eq:envelope}
 \omega_{ij} f_{ij}^2
 \le \Pi_i-\Pi_j
 \le \omega_{ij}\overline{f_{ij}} f_{ij},
\end{equation}
while retaining the flow bounds in
\eqref{eq:fixed_flow_bounds}. 

Together, these inequalities describe the convex hull of the isolated forward-flow graph, shown as the shaded region in Fig. \ref{fig:envelope}(b). The marked interior point illustrates that satisfying the relaxation does not imply satisfying the Weymouth equality.
\begin{figure}[thpb]
 \centering
 \framebox{%
  \parbox{3in}{%
   \centering
   \includegraphics[width=\linewidth]{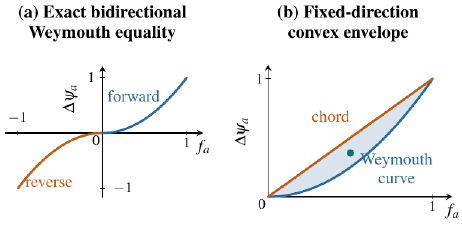}%
  }%
 }
 \caption{Single-pipe Weymouth relation and its
 fixed-direction relaxation.
 (a) Exact bidirectional equality.
 (b) Convex envelope of the selected nonnegative branch; the marked interior point violates the Weymouth equality.
 Here $\Delta\Pi_{ij}=\Pi_i-\Pi_j$, and the plots use $\omega_{ij}=\overline f_{ij}=1$.}
 \label{fig:envelope}
\end{figure}
The quadratic inequality in \eqref{eq:envelope}
admits the rotated second-order-cone representation
\begin{equation}\label{eq:soc}
 \left(
 \Pi_i-\Pi_j,
 \frac{1}{2},
 \sqrt{\omega_{ij}}f_{ij}
 \right)
 \in\mathbb Q_r^3,
\end{equation}
where
\begin{equation*}
 \mathbb Q_r^3
 =
 \left\{
 (r,s,t)\in\mathbb R^3:
 r,s\geq 0,\; 2rs\geq t^2
 \right\}.
\end{equation*}
Substitution into the cone inequality gives
$\Pi_i-\Pi_j\ge\omega_{ij} f_{ij}^2$.
The upper inequality in \eqref{eq:envelope} is affine. Combining the pipe envelopes \eqref{eq:envelope} with the affine compressor constraints
\eqref{eq:compressor_affine}, nodal balances, operating bounds, and prescribed slack pressure gives a convex feasible set of each fixed market allocation. For the relaxed model, let $x=(g,d)$ denote the upper-level market allocation and $y=(f,\Pi)$ the lower-level decision variables. The allocation $x$ is fixed when solving the lower-level problem. Define the equality mapping by
\begin{equation}\label{eq:equality_map}
    H(x,y):=\begin{bmatrix}
        Af-B_g g+B_d d\\
        \Pi_s-K
    \end{bmatrix}.
\end{equation}
Thus, $H(x,y)=0$ imposes all nodal flow balances and the prescribed squared pressure at slack node $s$. Similarly, for the mapping 
\begin{equation}\label{eq:inequality_map}
    G(y):=
\begin{bmatrix}
  \bigl(\omega_{ij}f_{ij}^{2}-\Pi_i+\Pi_j\bigr)_
    {ij\in\mathbb P}\\
  \bigl(\Pi_i-\Pi_j-\omega_{ij}\overline f_{ij}f_{ij}\bigr)_
    {ij\in\mathbb P}\\
  \bigl(-f_{ij}\bigr)_{ij\in\mathbb E}\\
  \bigl(f_{ij}-\overline f_{ij}\bigr)_{ij\in\mathbb E}\\
  \bigl(\underline\Pi_i-\Pi_i\bigr)_{i\in\mathbb N}\\
  \bigl(\Pi_i-\overline\Pi_i\bigr)_{i\in\mathbb N}\\
  \bigl(\Pi_i-\Pi_j\bigr)_{ij\in\mathbb C}\\
  \bigl(\Pi_j-\alpha^{\max}\Pi_i\bigr)_{ij\in\mathbb C}
\end{bmatrix},
\end{equation}
imposes all inequality constraints, where each indexed block denotes a column vector. The condition $G(y) \leq 0$ imposes the pipe-envelopes, non-negative arc-flow bounds, pressure bounds, and affine compressor constraints. Then for fixed $x$, $H(x, \cdot)$ is affine and each component of $G$ is convex. With $Q(y):=\tilde\Phi_l(y)$ from Eq. \eqref{eq:surr_compressor}, the relaxed follower response set is
\begin{align}\label{eq:S_rel}
    \mathbb S_{\mathrm{rel}}(x)
    &:= \operatorname*{arg\,min}_{y}\quad
     Q(y)\\
    \text{s.t.}\quad
    & H(x,y)=0,
    \nonumber\\
    & G(y)\leq 0.
    \nonumber
\end{align}
Replacing the exact follower response set in Eq. 
\eqref{eq:bilevel_problem} by
$\mathbb S_{\mathrm{rel}}(x)$ gives
\begin{align}\label{eq:bilevel_rel}
    \min_{x,y}\quad
    & F_\theta(x,y)\\
    \text{s.t.}\quad
    & x\in\mathbb X,
    \nonumber\\
    & y\in\mathbb S_{\mathrm{rel}}(x).
    \nonumber
\end{align}

To obtain a single-level optimization problem, we replace the response constraint $y\in\mathbb S_{\mathrm{rel}}(x)$ by explicit conditions characterizing the follower optimality. The next section introduces four representations: the KKT-based MPCC, Wolfe-duality reformulation (WDP), Mond-Weir reformulation (MDP), extended Mond-Weir reformulation (eMDP), and Lagrange-dual reformulation (LDR).

\section{Single-Level Reformulations}\label{sec:reform}
Let $m$ denote the number of inequalities in $G(y)$. Associate multipliers $u\in\mathbb R_{\geq0}^{m}$ with $G(y)\leq0$ and unrestricted multipliers
$v\in\mathbb R^{|\mathbb N|+1}$ with $H(x,y)=0$. Then, the follower Lagrangian, dual function, and dual value are
\begin{align}\label{eq:lower_lagrangian}
 \boldsymbol{\mathit{L}}(x,y,u,v) &:=Q(y)+u^{\mathsf T}G(y)+v^{\mathsf T}H(x,y),\\
 d(x,u,v)&:=\inf_{z}~\boldsymbol{\mathit{L}}(x,z,u,v),
 \quad
 d^*(x):=\sup_{u\geq0,\,v}d(x,u,v),
 \nonumber
\end{align}
where the infimum runs over $z\in\mathbb R^{|\mathbb A|+|\mathbb N|}$ and may equal $-\infty$. Let $\varphi(x)$ denote the optimal value of the relaxed follower problem in Eq. \eqref{eq:S_rel}.
Derivatives with respect to the network variables treat the market allocation $x$ as fixed.

The equivalence statements below are conditional on the following assumption: for every $x\in\mathbb X$ with a nonempty relaxed follower feasible set, its Lagrangian dual attains the follower's optimal value. This assumption ensures the existence of KKT multipliers at follower minimizers. Convexity, in turn, makes the KKT
conditions sufficient for global follower
optimality \cite{boyd2004convex}.

Each formulation minimizes $F_\theta(x,y)$ over the physical variables and its certificate variables, subject to
\begin{equation}\label{eq:common_primal}
    x\in\mathbb X,\qquad
    H(x,y)=0,\qquad
    G(y)\leq0,
\end{equation}
and the additional conditions specified below.

All three dual-based formulations introduce an
auxiliary point $z=(f^z,\Pi^z)\in\mathbb R^{|\mathbb A|+|\mathbb N|}$ and require
\begin{equation}\label{eq:dual_stationarity}
    \nabla_z \boldsymbol{\mathit{L}} (x,z,u,v)=0.
\end{equation}
The point $z$ is used to certify follower optimality; it is not required to satisfy $G(z)\leq0$ or $H(x,z)=0$. The same market allocation $x$ appears in the physical constraints and the certificate conditions \cite{li2026comparison}.

\begin{itemize}[leftmargin=*]
\item \textbf{KKT-based reformulation (MPCC):} The MPCC supplements the common constraints \eqref{eq:common_primal} with follower stationarity, dual feasibility, and complementarity:
 \begin{align}\label{eq:mpcc_certificate}
    &\nabla_y \boldsymbol{\mathit{L}}(x,y,u,v)=0,\\
    &u_\ell G_\ell(y)=0,
    \qquad \ell=1,\ldots,m.
    \nonumber
 \end{align}
Together with $u\geq0$ and $G(y)\leq0$, the second condition imposes the complementarity pairs $0\leq u_\ell\perp-G_\ell(y)\geq0$. Complementarity requires a strictly slack
inequality to have zero multiplier; an active
inequality may have a nonzero multiplier.

\item \textbf{Wolfe-duality reformulation (WDP):} WDP introduces the auxiliary network variables $z$ and the multipliers $(u,v)$.
It combines the stationarity condition
\eqref{eq:dual_stationarity} with the Wolfe-gap inequality
\begin{align}\label{eq:wdp_certificate}
    Q(y)-Q(z)&-u^{\mathsf T}G(z) -v^{\mathsf T}H(x,z)\leq 0.
\end{align}
Stationarity and convexity make $\boldsymbol{\mathit{L}}(x,y,u,v)$ a lower bound on the follower's optimal value. The gap inequality requires the feasible response $y$ to attain that bound.

\item \textbf{Mond-Weir reformulation (MDP):}
MDP retains Eq. \eqref{eq:dual_stationarity} and
replaces the combined Wolfe-gap inequality by
  \begin{align}\label{eq:mdp_certificate}
    &Q(y)-Q(z)\leq 0,\\
    &u^{\mathsf T}G(z)+v^{\mathsf T}H(x,z) \geq 0.
    \nonumber
\end{align}
That is, these conditions separate the follower-objective comparison from an aggregate condition on the multiplier-weighted constraints.

\item \textbf{Extended Mond--Weir reformulation
(eMDP):} eMDP also retains Eq. \eqref{eq:dual_stationarity} and imposes
  \begin{align}\label{eq:emdp_certificate}
    &Q(y)-Q(z) \leq 0,\\
    &u_\ell G_\ell(z) \geq 0,
    \qquad \ell=1,\ldots,m,
    \nonumber\\
    &v_i H_i(x,z)=0,
    \qquad i\in\mathbb N.
    \nonumber
\end{align}
Notice that, the aggregate MDP condition is replaced by componentwise conditions. Unlike MPCC complementarity, the inequality products are required to be nonnegative rather than zero.

\item \textbf{Lagrange-dual reformulation (LDR)}: The LDR formulation uses the dual function in Eq. \eqref{eq:lower_lagrangian} by considering common constraints \eqref{eq:common_primal} and the following single inequality for $u\geq 0$, 
\begin{equation}\label{eq:ldr_certificate}
    Q(y)\leq d(x,u,v).
\end{equation}
Because $d$ is an infimum, Eq. \eqref{eq:ldr_certificate} is the infinite family $Q(y)\leq\boldsymbol{\mathit{L}}(x,z,u,v)$ for all $z\in\mathbb R^{|\mathbb A|+|\mathbb N|}$, so LDR is a semi-infinite program with a noncompact index set. Such problems are computationally expensive in general, because evaluating $d(x,u,v)$ for a nonconvex lower level is itself a global optimization problem \cite{turan2025bounding,mehrotra2014cutting,melcher2026semi}. In our convex setting, the inner infimum is attained exactly at the stationary points of $\boldsymbol{\mathit{L}}(x,\cdot,u,v)$, which is what the finite formulations above exploit.
\end{itemize}
Assuming that every relaxed follower minimizer
admits KKT multipliers, then all five formulations
have the same feasible projection onto $(x,y)$
as $y\in\mathbb S_{\mathrm{rel}}(x)$. For any follower minimizer, its KKT multipliers satisfy the MPCC conditions, and setting $z=y$ also satisfies each dual-based formulation. Since $y$ is then a stationary point of the convex Lagrangian $\boldsymbol{\mathit{L}}(x,\cdot,u,v)$ and complementarity gives $\boldsymbol{\mathit{L}}(x,y,u,v)=Q(y)$, the same multipliers yield $d(x,u,v)=Q(y)$ and hence satisfy the LDR inequality \eqref{eq:ldr_certificate} as well. Nevertheless, the resulting five single-level formulations remain nonconvex, so a local numerical solution does not establish global optimality of the leader objective.

The literature shows that MPCC, WDP, MDP, eMDP, and LDR characterize the feasible set of Eq. \eqref{eq:bilevel_rel} when strong duality holds with dual attainment for every $x$ \cite{li2023wolfe,li2024mondweir,dempe2025duality,li2026comparison}. In this section, we make two extensions: First, without assuming strong duality or dual attainment, we show that all five formulations project onto the same set of the certifiable part of the follower response graph. Thus, none can generate a spurious follower response, and the assumption is needed only to guarantee that every follower-optimal response can be represented in the corresponding reformulation. Second, when the objective comparison in a dual-based formulation is enforced only within a tolerance $\varepsilon$, as in numerical computations, we show that $\varepsilon$ simultaneously bounds the follower suboptimality of $y$, the duality gap at $x$, and the multiplier suboptimality. We also characterize the smallest tolerance for which a given response can be lifted.

For $\varepsilon\geq0$, the $\varepsilon$-relaxed WDP, MDP, and eMDP replace the objective comparison in 
Eqs. \eqref{eq:wdp_certificate}, \eqref{eq:mdp_certificate}, and \eqref{eq:emdp_certificate} by $Q(y)-\boldsymbol{\mathit{L}}(x,z,u,v)\leq\varepsilon$ and $Q(y)-Q(z)\leq\varepsilon$, respectively. The $\varepsilon$-relaxed LDR replaces Eq. \eqref{eq:ldr_certificate} by $Q(y)\leq d(x,u,v)+\varepsilon$. All other conditions remain unchanged.
\begin{theorem}\label{thm:exactness}
Let $Q$ and $G$ be convex and continuously differentiable and $H$ affine in $y$ and define $\mathbb X_{\mathrm D}:=\{x\in\mathbb X:\ d(x,u,v)=\varphi(x)\ \text{for some } u\geq0,\ v\}$.
\begin{enumerate}[label=(\roman*), leftmargin=*]
\item The feasible sets of MPCC, WDP, MDP, eMDP, and LDR have the same projection onto $(x,y)$, namely
\begin{equation}\label{eq:projection}
    \{(x,y):\ x\in\mathbb X_{\mathrm D},\ y\in\mathbb S_{\mathrm{rel}}(x)\}.
\end{equation}
In particular, every feasible point of any of the five formulations is feasible for \eqref{eq:bilevel_rel}, and the projection equals the feasible set of \eqref{eq:bilevel_rel} if and only if $\mathbb X_{\mathrm D}=\mathbb X$.
\item Every feasible point of an $\varepsilon$-relaxed WDP, MDP, eMDP, or LDR satisfies
\begin{equation}\label{eq:three_term}
    \underbrace{Q(y)-\varphi(x)}_{\text{follower subopt.}}
    +\underbrace{\varphi(x)-d^*(x)}_{\text{duality gap}}
    +\underbrace{d^*(x)-d(x,u,v)}_{\text{multiplier subopt.}}
    \leq\varepsilon,
\end{equation}
and each of the three terms is nonnegative.
\item Let $x\in\mathbb X$, $y\in\mathbb S_{\mathrm{rel}}(x)$, and $\tau(x):=\varphi(x)-d^*(x)$. Then $(x,y)$ lifts to a feasible point of the $\varepsilon$-relaxed LDR for every $\varepsilon>\tau(x)$ and for no $\varepsilon<\tau(x)$; at $\varepsilon=\tau(x)$ a lift exists if and only if the supremum defining $d^*(x)$ is attained. The same holds for the $\varepsilon$-relaxed WDP whenever the infimum defining $d(x,u,v)$ is attained at the lifting multipliers, for instance when $Q$ is strongly convex. If $x\in\mathbb X_{\mathrm D}$, then $(x,y)$ lifts to every relaxed formulation for every $\varepsilon\geq0$.
\end{enumerate}
\end{theorem}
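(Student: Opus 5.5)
The argument rests on one chain of inequalities. Its ingredients are weak duality, convexity of $\boldsymbol{\mathit{L}}(x,\cdot,u,v)$ for $u\ge0$ (since $Q$ and each $G_\ell$ are convex and $H$ is affine), and the fact that stationarity \eqref{eq:dual_stationarity} makes $z$ a global minimizer of that Lagrangian, so that $\boldsymbol{\mathit{L}}(x,z,u,v)=d(x,u,v)$. For any $(x,y)$ satisfying \eqref{eq:common_primal} and any $u\ge0$, $v$, we have
\[
d(x,u,v)\le d^*(x)\le \varphi(x)\le Q(y).
\]
This holds because $d(x,u,v)\le \boldsymbol{\mathit{L}}(x,y,u,v)\le Q(y)$ for every feasible $y$. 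It immediately gives nonnegativity of the three terms in \eqref{eq:three_term}, and the left side of \eqref{eq:three_term} telescopes to $Q(y)-d(x,u,v)$. So part (ii) reduces to showing $Q(y)-d(x,u,v)\le\varepsilon$ for each relaxed formulation.
\begin{itemize}[leftmargin=*]
\item For LDR this is the relaxed certificate itself.
\item For WDP, stationarity gives $d=\boldsymbol{\mathit{L}}(x,z,u,v)$, and the certificate is $Q(y)-\boldsymbol{\mathit{L}}(x,z,u,v)\le\varepsilon$.
\item For MDP, the aggregate condition $u^{\mathsf T}G(z)+v^{\mathsf T}H(x,z)\ge0$ gives $\boldsymbol{\mathit{L}}(x,z,u,v)\ge Q(z)$, so $Q(y)-d\le Q(y)-Q(z)\le\varepsilon$.
\item For eMDP, the componentwise conditions sum to the MDP aggregate condition, so the MDP argument applies.
\item For MPCC, stationarity at $y$ plus convexity gives $d=\boldsymbol{\mathit{L}}(x,y,u,v)$, and complementarity gives $\boldsymbol{\mathit{L}}(x,y,u,v)=Q(y)$, so the gap is $0$.
\end{itemize}

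Part (i) follows from the $\varepsilon=0$ case of this chain. Feasibility forces $Q(y)=\varphi(x)=d(x,u,v)$. Hence $y\in\mathbb S_{\mathrm{rel}}(x)$, and $x\in\mathbb X_{\mathrm D}$ is witnessed by the same multipliers. This shows each projection is contained in \eqref{eq:projection}.

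For the reverse inclusion, take $x\in\mathbb X_{\mathrm D}$ with multipliers $(u,v)$ satisfying $d(x,u,v)=\varphi(x)$, and take $y\in\mathbb S_{\mathrm{rel}}(x)$. Then $Q(y)=d(x,u,v)\le \boldsymbol{\mathit{L}}(x,y,u,v)\le Q(y)$. The last inequality holds because $u\ge0$, $G(y)\le0$ and $H=0$. Equality throughout gives two facts.
\begin{itemize}[leftmargin=*]
\item $u^{\mathsf T}G(y)=0$, which is termwise complementarity since every term has the same sign.
\item $y$ minimizes the differentiable convex function $\boldsymbol{\mathit{L}}(x,\cdot,u,v)$, so $\nabla_y\boldsymbol{\mathit{L}}=0$.
\end{itemize}
Thus $(y,u,v)$ is MPCC-feasible. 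Setting $z=y$ satisfies WDP, MDP and eMDP, because $G(z)\le0$ and complementarity give $u_\ell G_\ell(z)=0$ and $H=0$. LDR holds by choice of $(u,v)$. The final claim of (i) is then immediate, using that \eqref{eq:bilevel_rel} has feasible set $\{x\in\mathbb X,\ y\in\mathbb S_{\mathrm{rel}}(x)\}$. The "only if" direction needs that an $x\notin\mathbb X_{\mathrm D}$ with nonempty response set exists. Otherwise $\mathbb X_{\mathrm D}$ should be read relative to the $x$ with $\mathbb S_{\mathrm{rel}}(x)\neq\emptyset$, and I would note this convention.

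For part (iii), the LDR lift exists exactly when $\varphi(x)-d(x,u,v)\le\varepsilon$ for some $u\ge0$, $v$, since $Q(y)=\varphi(x)$. If $\varepsilon>\tau(x)$, the definition of the supremum yields multipliers with $d(x,u,v)>d^*(x)-(\varepsilon-\tau(x))$. If $\varepsilon<\tau(x)$, no multipliers work, because $d(x,u,v)\le d^*(x)$. At $\varepsilon=\tau(x)$ a lift requires $d(x,u,v)=d^*(x)$, which is exactly attainment of the supremum.

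For WDP, I would choose the same multipliers together with a point $z$ attaining $\inf_z\boldsymbol{\mathit{L}}(x,z,u,v)$. Such a $z$ is stationary, and the WDP certificate then coincides with the LDR certificate. Strong convexity of $Q$ makes $\boldsymbol{\mathit{L}}(x,\cdot,u,v)$ strongly convex, since the $u$-terms are convex and the $v$-terms affine, so a minimizer exists.

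If $x\in\mathbb X_{\mathrm D}$, the exact lifts from (i) are feasible for every $\varepsilon\ge0$. The main obstacle is conceptual rather than technical. It is the attainment subtlety in the WDP and MDP lifts: stationarity requires an actual minimizer $z$. So the WDP statement must be restricted to attained infima, while for MDP and eMDP the lift must fall back on the exact case $x\in\mathbb X_{\mathrm D}$.
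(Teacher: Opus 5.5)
Your proposal is correct and follows essentially the same route as the paper: the weak-duality chain $d(x,u,v)\le d^*(x)\le\varphi(x)\le Q(y)$, the reverse bound $Q(y)-d(x,u,v)\le\varepsilon$ extracted from each certificate (stationarity makes $z$ a global minimizer of the convex Lagrangian), the equality chain with $z=y$ for the exact lift, and the supremum/attainment case analysis for (iii); the only difference is that you present (ii) before (i). Your caveat on the ``only if'' part of (i) is well placed, since the paper closes that direction only by asserting that $\mathbb S_{\mathrm{rel}}(x)\neq\emptyset$ for every $x\in\mathbb X$, which is precisely the extra hypothesis you identify.
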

\begin{proof}
(i) Let a point be feasible for one of the five formulations. Feasibility of $y$ and weak duality give $Q(y)\geq\varphi(x)\geq d^*(x)\geq d(x,u,v)$. Next, we show $Q(y)\leq d(x,u,v)$. For LDR this is trivial from Eq. \eqref{eq:ldr_certificate}. For WDP, MDP, and eMDP, stationarity \eqref{eq:dual_stationarity} and convexity of $\boldsymbol{\mathit{L}}(x,\cdot,u,v)$ give $d(x,u,v)=\boldsymbol{\mathit{L}}(x,z,u,v)$, and Eq. \eqref{eq:wdp_certificate} then implies $Q(y)\leq d(x,u,v)$; the MDP conditions sum to Eq. \eqref{eq:wdp_certificate}, and the eMDP conditions sum to the MDP conditions. For MPCC, complementarity and $H(x,y)=0$ give $\boldsymbol{\mathit{L}}(x,y,u,v)=Q(y)$, and stationarity at $y$ gives $d(x,u,v)=\boldsymbol{\mathit{L}}(x,y,u,v)$. Hence, the claim holds and we obtain $y\in\mathbb S_{\mathrm{rel}}(x)$ and $x\in\mathbb X_{\mathrm D}$. Conversely, let $x\in\mathbb X_{\mathrm D}$, $y\in\mathbb S_{\mathrm{rel}}(x)$, and $(u,v)$ with $d(x,u,v)=\varphi(x)=Q(y)$. Then
\[
    d(x,u,v)\leq\boldsymbol{\mathit{L}}(x,y,u,v)=Q(y)+u^{\mathsf T}G(y)\leq Q(y)=d(x,u,v),
\]
so $u^{\mathsf T}G(y)=0$, which gives $u_\ell G_\ell(y)=0$ for every $\ell$ because each product is nonpositive, and $y$ minimizes $\boldsymbol{\mathit{L}}(x,\cdot,u,v)$, which gives $\nabla_y\boldsymbol{\mathit{L}}(x,y,u,v)=0$. Thus $(x,y,u,v)$ is MPCC feasible, and $z=y$ makes it feasible for WDP, MDP, eMDP, and LDR. The last claim follows because $\mathbb S_{\mathrm{rel}}(x)$ is nonempty for every $x\in\mathbb X$.
 
(ii) The argument of (i) applied to the relaxed conditions gives $Q(y)-d(x,u,v)\leq\varepsilon$. The three terms in Eq. \eqref{eq:three_term} are nonnegative by feasibility of $y$ and weak duality, and they telescope to $Q(y)-d(x,u,v)$.
 
(iii) Because $Q(y)=\varphi(x)$, a lift to the relaxed LDR exists if and only if $d(x,u,v)\geq\varphi(x)-\varepsilon=d^*(x)+\tau(x)-\varepsilon$ for some $(u,v)$. If $\varepsilon>\tau(x)$, the definition of the supremum provides such $(u,v)$. If $\varepsilon<\tau(x)$, no $(u,v)$ qualifies because $d(x,u,v)\leq d^*(x)$. If $\varepsilon=\tau(x)$, the requirement is $d(x,u,v)=d^*(x)$. For WDP, let $z$ attain the infimum defining $d(x,u,v)$ at the lifting multipliers. Then $z$ satisfies Eq. \eqref{eq:dual_stationarity} by convexity, and $Q(y)-\boldsymbol{\mathit{L}}(x,z,u,v)=Q(y)-d(x,u,v)\leq\varepsilon$ is the relaxed WDP inequality. Strong convexity of $Q$ makes $\boldsymbol{\mathit{L}}(x,\cdot,u,v)$ strongly convex, so its infimum is attained. Finally, if $x\in\mathbb X_{\mathrm D}$, the exact lift constructed in (i) is feasible for every relaxed formulation at every $\varepsilon\geq0$.
\end{proof}

Theorem \ref{thm:exactness} has two consequences from the computational perspective. First, any local solution of the finite formulations yields a follower-optimal response even at market allocations where strong duality is not known to hold; therefore, no follower constraint qualification must be verified in advance. Second, for a computed WDP, MDP, or eMDP solution whose stationarity condition \eqref{eq:dual_stationarity} holds to solver precision, the objective-comparison residual $\varepsilon$ provides, by Eq. \eqref{eq:three_term}, a certified upper bound on the follower suboptimality $Q(y)-\varphi(x)$ without additional assumptions. However, MPCC provides no analogous bound because a complementarity residual does not control $Q(y)-\varphi(x)$ without additional error-bound constants.

\section{Numerical Experiments}\label{sec:numerics}\label{sec:numer}
We compare MPCC, WDP, MDP, and eMDP on the six-node network in Fig. \ref{fig:network}, using both exact
bidirectional pipe physics and the fixed-direction convex relaxation. Consumer data are given in Table \ref{tab:demand}, with total requested demand
$\sum_{i\in\mathbb D}\overline d_i=130$ kg/s. Production is bounded by $1000$ kg/s. Nodal pressures lie between $3$ and $6$ MPa, with the slack pressure prescribed as $p_1=4$ MPa. Pressures are normalized by $3$ MPa before squaring, giving the numerical slack value $\Pi_1=16/9$. Both experiments maximize $-\Phi_u$ in Eq. \eqref{eq:F}, with $\theta=0.05$, $c_T=1$, and a signed pipe flow tariff. For these experiments, we use the convex surrogate compressor-work objective in \eqref{eq:surr_compressor} and \eqref{eq:S_rel} with $\beta_c=1$, that is
\begin{equation}\label{eq:numerical_surrogate}
    Q(y)=\sum_{(i,j)\in C}(\Pi_j-\Pi_i).
\end{equation}
The same surrogate is used in both models, preserving convexity of the relaxed follower.

\begin{table}[!t]
\begin{center}
\begin{tabular}{|c||c||c||c|}
\hline
Consumer & Node & $\overline d_i$ & $b_i$\\
\hline
1 & 2 & 30 & 3.0\\
2 & 3 & 40 & 4.0\\
3 & 4 & 20 & 5.0\\
4 & 3 & 30 & 2.5\\
5 & 4 & 10 & 3.0\\
\hline
\end{tabular}
\caption{Consumer data: $\overline d_i$ is in kg/s and $V_i(d_i)=b_i d_i$.}\label{tab:demand}
\end{center}
\end{table}

\begin{figure}[!t]
 \centering
 \framebox{%
  \parbox{3.0in}{%
   \centering\includegraphics[width=\linewidth]{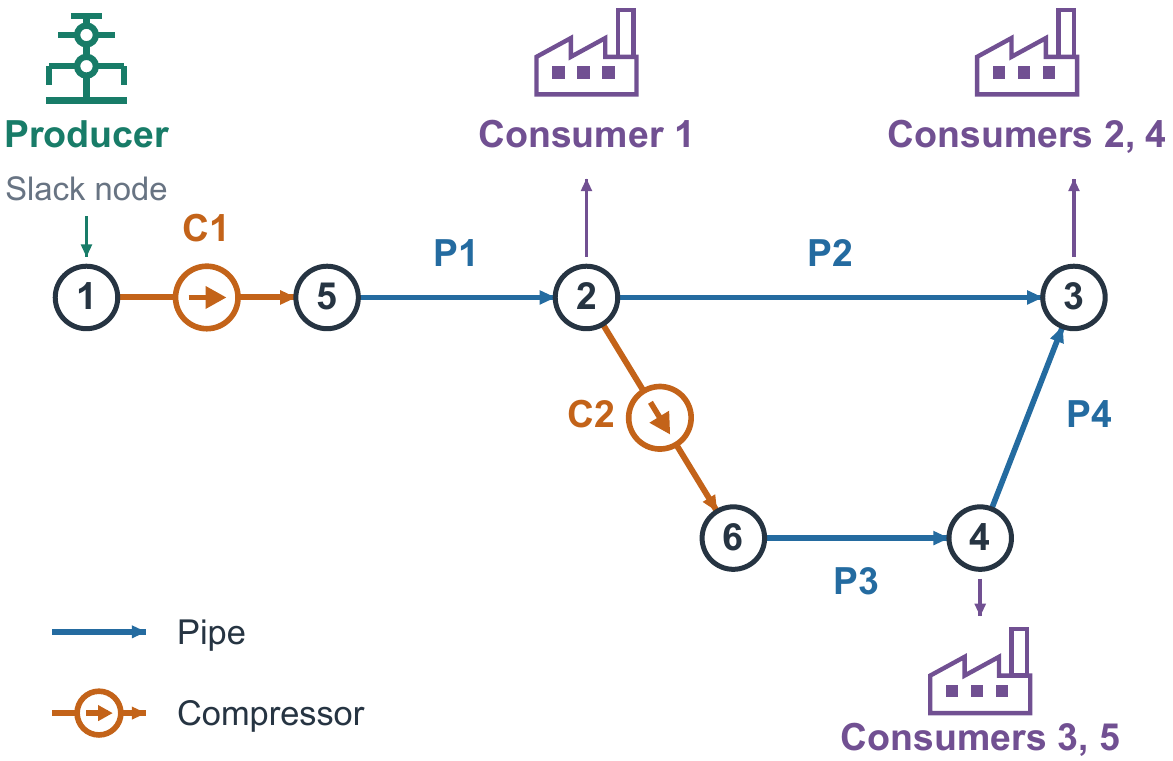}%
  }%
 }
    \caption{Six-node network. Blue arrows indicate pipe reference orientations; orange connections denote directed compressors. Pipe flows may reverse in the exact model but not in the relaxation.}\label{fig:network}
\end{figure}

\subsection{Solvers and reference construction}
The experiments use Python 3.12.14, Ipopt 3.14.19, and Gurobi 12.0.2. Ipopt uses exact Hessians, stopping and constraint-violation tolerances of $10^{-8}$, and a budget of $5600$ iterations and $120$ seconds per configuration. MPCC shares this budget across Scholtes parameters $10^{-2},10^{-3},\ldots,10^{-8}$. Gurobi uses feasibility and optimality tolerance of $10^{-9}$, exact model and relative gap tolerances of $10^{-6}$. Reference verification also uses physical feasibility and follower gap thresholds of $10^{-6}$ and an upper gap threshold of $10^{-6} \max\{1, |\Phi_u|\}$. All Ipopt configurations use a common prescribed physical starting point at zero, while MPCC imposes $0\leq u_\ell[-G_\ell(y)]\leq t$ at each Scholtes stage, retaining primal and dual feasibility. For Gurobi's exact-network model, the signed Weymouth equation is represented by $r_{ij}\geq 0,\quad r_{ij}^2=f_{ij}^2, \quad \Pi_i-\Pi_j=\omega_{ij} f_{ij} r_{ij}, \quad ij \in\mathbb P.$ Thus, $r_{ij}=|f_{ij}|$ without restricting the sign of $f_{ij}$. The exact dual-based models are experimental since the convex-follower equivalence argument for the dual-based reformulations is not assumed to hold for the exact follower \cite{li2026comparison}. Gurobi provides a physical-network reference and a numerical lower bound for each model. A separate follower solve fixes the reference market allocation and checks its original network response.

\subsection{Results}
We report relative differences in both objectives and total served demand relative to the corresponding best-known Gurobi reference. Network feasibility and follower stationarity are measured through the mass-balance $(R_B)$, Weymouth $(R_W)$, and projected follower stationarity residuals $R_S=\|y-\Pi_{\mathbb Y_{{op}}}(y-s_y)\|_2$ for the exact and relaxed models, where $\Pi_{\mathbb Y_{{op}}}$ denotes Euclidean projection onto the flow and pressure bounds, compressor limits, and
prescribed slack-pressure constraint, and $s_y$ contains the follower-objective gradient and the multiplier contributions from the balance and pipe constraints. Table \ref{tab:exact_relaxed_results} reports percentage changes in the upper and lower objectives ($\Delta_{\Phi_u}$, $\Delta_{Q}$) and the percentage shortfall in total served demand ($\Delta_D$), using the following formulas
\begin{align*}
\Delta\Phi_u &= 100\frac{\Phi_u-\Phi_u^*}{|\Phi_u^*|},\\
\Delta Q &= 100\frac{Q-Q^*}{|Q^*|},
\qquad
\Delta D = 100\frac{D^*-D}{D^*},
\end{align*}
where starred quantities are evaluated at the best-known solution obtained using Gurobi. Table \ref{tab:exact_relaxed_residuals} reports $\Delta R_k=\max\{0,R_k-\tau_k\}$,
$k\in\{B,W,S\}$.

\begin{table}[!t]
\begin{center}
\small
\setlength{\tabcolsep}{3pt}
\begin{tabular}{|c||c||c||c|}
\hline
Method & $\Delta \Phi_u $ & $\Delta Q $ & $\Delta D$\\
\hline
\multicolumn{4}{|c|}{Convex-relaxed follower}\\
\hline
MPCC       & $ 0 \% $  & $ 0 \% $  & $ 0 \% $\\
WDP        & $ 0 \% $  & $ 0 \% $  & $ 0 \% $\\
MDP        & $ 0 \% $  & $ 0 \% $  & $ 0 \% $\\
eMDP$^{*}$ & $58.1 \%$ & $100.0 \%$ & $70.1 \%$\\
\hline
\multicolumn{4}{|c|}{Exact nonconvex follower}\\
\hline
MPCC       & $ 0 \% $  & $ 0 \% $   & $ 0 \% $\\
WDP        & $ 0 \% $  & $ 0 \% $   & $ 0 \% $\\
MDP        & $1.3 \%$  & $52.4 \%$ & $3.4 \%$\\
eMDP$^{*}$ & $37.3 \%$ & $43.1 \%$ & $36.2 \%$\\
\hline
\end{tabular}
\caption{\scriptsize Percentage differences from the best-known Gurobi solution. For both models, these solutions have $-\Phi^*_u=241.952938$, $Q^*=4.669637$, and total served demand of $D^* = 123.682193$ kg/s out of $130$ kg/s requested. An asterisk marks an unresolved solver run in the mentioned budget.}\label{tab:exact_relaxed_results}
\end{center}
\vspace{-3ex}
\end{table}

\begin{table}[h]
\begin{center}
\small
\setlength{\tabcolsep}{3pt}
\begin{tabular}{|c||c||c||c|}
\hline
Method & $\Delta R_B$ & $\Delta R_W$ & $\Delta R_S$\\
\hline
\multicolumn{4}{|c|}{Convex-relaxed follower}\\
\hline
MPCC       & $ < 10^{-6} $ & $ < 10^{-6} $ & $ < 10^{-6} $\\
WDP        & $ < 10^{-6} $ & $ < 10^{-6} $ & $ < 10^{-6} $\\
MDP        & $ < 10^{-6} $ & $ < 10^{-6} $ & $ < 10^{-6} $\\
eMDP$^{*}$ & $ < 10^{-6} $ & $0.279$ & $ < 10^{-6} $\\
\hline
\multicolumn{4}{|c|}{Exact nonconvex follower}\\
\hline
MPCC       & $ < 10^{-6} $ & $ < 10^{-6} $ & $ < 10^{-6} $\\
WDP        & $ < 10^{-6} $ & $ < 10^{-6} $ & $2.16$\\
MDP        & $ < 10^{-6} $ & $ < 10^{-6} $ & $ < 10^{-6} $\\
eMDP$^{*}$ & $ < 10^{-6} $ & $ < 10^{-6} $ & $1.22$\\
\hline
\end{tabular}
\caption{Residual excesses above
$\tau_B=\tau_W=\tau_S=10^{-6}$ in the respective residual units. $\Delta R_B$ is in kg/s; $\Delta R_W$ uses squared per-unit pressure. "$ - $" indicates a residual value less than the reported tolerance.}\label{tab:exact_relaxed_residuals}
\end{center}
\end{table}

Tables \ref{tab:exact_relaxed_results} and \ref{tab:exact_relaxed_residuals} show that relaxed MPCC, WDP, and MDP match the Gurobi reference objective and service to the reported tolerance with small residuals. For both the convex-relaxed follower and the exact nonconvex follower, the best Gurobi solutions have $-\Phi^*_u=241.952938$, $Q^*=4.669637$, and total served demand of $D^* = 123.682193$ kg/s out of $130$ kg/s requested. Table \ref{tab:exact_relaxed_results} shows the percentage differences with respect to the best-known Gurobi solution using four reformulations. Table \ref{tab:exact_relaxed_residuals} shows residual excesses above $\tau_B=\tau_W=\tau_S=10^{-6}$ in the respective residual units.  For exact physics, MPCC also reproduced the reference values with samll residuals, while WDP matches its objective and service, but has a larger projected stationarity residual. Moreover, MDP returns small residuals at an allocation with lower service and a less favorable upper objective for exact model. The relaxed eMDP run returns a point with a large first-order optimality residual for the complete single-level problem, whereas the exact run reaches the iteration limit.

\section{Conclusions}\label{sec:conc}
We developed a bilevel gas-market model coupling market equilibrium model with network operation using a leader-follower reformulation approach. For the convexified follower, we characterized the common certifiable response set of five single-level formulations and bounded follower suboptimality through dual-certificate tolerances. On the six-node instance, relaxed MPCC, WDP, and MDP, and exact MPCC, matched the best-known Gurobi objectives and service with residuals below $10^{-6}$. Exact WDP matched these values but exceeded the stationarity threshold. These numerical observations do not extend the convex follower guarantees to the exact nonconvex model. Extending the theoretical analysis to this setting is left for future work.


\bibliographystyle{unsrt}
\bibliography{references}

\end{document}